\newtheorem*{theo*} {Theorem}
\renewcommand{\qed}{\hfill \mbox{\raggedright \rule{.1in}{.1in}}}
\begin{document}
\centerline{\bf EIGENVECTORS FROM EIGENVALUES REVISITED}

\bigskip
\centerline{ Carlos Tomei}

\centerline{\small Depto. de Matemática, PUC-Rio,  Brazil, carlos.tomei@gmail.com }

\bigskip
Recently, Denton, Parke, Tao and Zhang posted an article in ArXiv [DPTZ] presenting a formula for the  eigenvectors of a Hermitian $n \times n$ matrix $A$ in terms of the eigenvalues of $A$ and the matrices $M_i$ obtained from removing the i-th row and column. It turns out that the formula is familiar in different contexts. It is equation (4.9) in [CG], for example, where the first coordinates of the eigenvectors (up to phase) are expressed from eigenvalues of $A$ and $M_1$. The key ingredient is a meromorphic function $f(\lambda)$ which appears also  in a classical proof of the spectral theorem for operators in Hilbert space [D].

To simplify notation, as in their proofs, denote by $M$ the $(n-1) \times (n-1)$ matrix obtained from deletion of the first column and row of $A$. 

Let $v_i$ be the first coordinate of a normalized eigenvector associated with the eigenvalue $\lambda_i \in \sigma(A)$. Clearly, $|v_i|^2$ is well defined.

\begin{theo*}
$ \ \  |v_i|^2 \ \Pi_{k=1; k \ne i}^{n} \ \big( \lambda_i(A) - \lambda_k(A) \big) \ = \ \Pi_{k=1}^{n-1} \ \big(\lambda_i(A) - \lambda_k(M) \big) \ $.
\end{theo*}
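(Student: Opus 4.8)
The plan is to extract everything from a single rational function, the meromorphic $f(\lambda) = \big( (A - \lambda I)^{-1} \big)_{11} = \big\langle (A - \lambda I)^{-1} e_1 , e_1 \big\rangle$ attached to the first coordinate vector $e_1$, and to compute it in two independent ways. First, by Cramer's rule the $(1,1)$ entry of $(A - \lambda I)^{-1}$ is the $(1,1)$ cofactor of $A - \lambda I$ divided by $\det(A - \lambda I)$; deleting the first row and column of $A - \lambda I$ leaves $M - \lambda I$, so that cofactor equals $\det(M - \lambda I)$ and, writing both determinants as products over their roots,
\[ f(\lambda) \ = \ \frac{\det(M - \lambda I)}{\det(A - \lambda I)} \ = \ \frac{\prod_{k=1}^{n-1} \big( \lambda_k(M) - \lambda \big)}{\prod_{k=1}^{n} \big( \lambda_k(A) - \lambda \big)} . \]
Second, diagonalising $A = U \Lambda U^*$ with $U$ unitary and $\Lambda = \diag\big( \lambda_1(A), \dots, \lambda_n(A) \big)$, and noting that the $i$-th column of $U$ is a unit eigenvector whose first entry has modulus $|v_i|$, one obtains the partial-fraction form $f(\lambda) = \sum_{i=1}^{n} |v_i|^2 / \big( \lambda_i(A) - \lambda \big)$. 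This is the function $f$ alluded to in the introduction, and the theorem is essentially the statement that these two descriptions of it agree.

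To finish, I would clear denominators: multiplying the partial-fraction form by $\prod_{k=1}^{n} \big( \lambda_k(A) - \lambda \big)$ and equating with the first form gives the polynomial identity
\[ \prod_{k=1}^{n-1} \big( \lambda_k(M) - \lambda \big) \ = \ \sum_{i=1}^{n} |v_i|^2 \prod_{k \ne i} \big( \lambda_k(A) - \lambda \big) , \]
valid for every $\lambda$; equivalently, one compares the residues of $f$ at its poles $\lambda_i(A)$. Fixing an index $i$ with $\lambda_i(A)$ simple and substituting $\lambda = \lambda_i(A)$ annihilates every summand on the right with index $j \ne i$, since such a product retains the factor $\lambda_i(A) - \lambda_i(A)$; what remains is $\prod_{k=1}^{n-1} \big( \lambda_k(M) - \lambda_i(A) \big) = |v_i|^2 \prod_{k \ne i} \big( \lambda_k(A) - \lambda_i(A) \big)$. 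Each side carries $n-1$ linear factors, so multiplying through by $(-1)^{n-1}$ reverses every difference and produces exactly the asserted formula.

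I do not expect a serious obstacle here: the two evaluations of $f$ are routine, and the only genuine points of care are the global sign bookkeeping just described and the degenerate case of a repeated eigenvalue. If $\lambda_i(A)$ has multiplicity $m \ge 2$ then it coincides with some $\lambda_k(A)$, $k \ne i$, so the left-hand side of the theorem vanishes; and by Cauchy interlacing $\lambda_i(A)$ is then also an eigenvalue of $M$ (of multiplicity at least $m-1$), so the right-hand side vanishes as well. Thus the identity still holds, trivially, and its real content is concentrated on the simple eigenvalues — exactly the situation in which $|v_i|^2$ is unambiguous — while the degenerate case can alternatively be recovered by a continuity argument under a small perturbation of $A$.
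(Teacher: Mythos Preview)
Your argument is correct and follows essentially the same route as the paper: both compute the rational function $f(\lambda) = \langle e_1, (A-\lambda I)^{-1} e_1 \rangle$ in two ways --- via Cramer's rule as $\det(M-\lambda I)/\det(A-\lambda I)$, and via diagonalisation as $\sum_i |v_i|^2/(\lambda_i(A)-\lambda)$ --- and then read off the identity from the residues at the simple poles. The paper handles degeneracies by a one-line density argument, whereas you additionally spell out the sign bookkeeping and offer the Cauchy interlacing alternative; these are refinements of presentation rather than differences of method.
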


\begin{proof} Yet another simplification: we prove  a generic case, where $A$ and $M$ have $2n-1$ distinct eigenvalues --- for the general case, use density and take limits of the equation in the statement of the theorem. From the spectral theorem, $A = V^\ast \Lambda V$, where the rows of the unitary matrix $V$ are normalized eigenvectors of $A$, and $\Lambda$ is a real, diagonal matrix. By Cramer's rule,

\[  f(\lambda) = \langle e_1, (A - \lambda I)^{-1} e_1 \rangle = \frac{ \det(M - \lambda I)}{\det(A - \lambda I)} = \frac{ \Pi_{k=1}^{n-1} \ \big( \lambda_k(M) - \lambda \big)}{\Pi_{k=1}^{n} \ \big(  \lambda_k(A) - \lambda \big)} \eqno(\ast)\]
and
\[ f(\lambda) =
\langle V e_1, (\Lambda - \lambda I)^{-1} V e_1 \rangle
 = \sum_{i=1}^n \frac{|v_i|^2}{\lambda_i(A) - \lambda} \ , \]
The result follows by computing the residues $|v_i|^2$ in equation $(\ast)$. \qed
\end{proof}

\bigskip
\noindent
[DPTZ] P. Denton, S. Parke, T. Tao, X. Zhang, {\it Eigenvectors from eigenvalues}, ArXiv:1908.03795v1

\noindent
[CG] M.Chu, E. Golub, {\it Structured inverse eigenvalue problems}, Acta Numerica (2002), 1-71.

\noindent
[D] P. Deift, {\it Orthogonal Polynomials and Random Matrices: a Riemann-Hilbert Approach}, Courant Lecture Notes 3 (2000), AMS.

\end{document}